\documentclass[a4paper]{amsart}

\usepackage{amssymb}
\usepackage{mathrsfs}
\usepackage{graphicx}
\usepackage[all,cmtip]{xy}
\usepackage[colorlinks,allcolors=red]{hyperref}
\usepackage{array}

\newcommand{\Ain}{\mbox{A$_{\infty}$}}

\newcommand{\Id}{\mbox{Id}}
\newcommand{\aCat}{\mbox{A$_{\infty}$Cat}}
\newcommand{\DgCat}{\mbox{DgCat}}

\newcommand{\Br}{\mbox{B}_{\infty}}
\newcommand{\G}{\mathscr{G}}
\newcommand{\F}{\mathscr{F}}
\newcommand{\A}{\mathscr{A}}
\newcommand{\B}{\mathscr{B}}
\newcommand{\C}{\mathscr{C}}
\newcommand{\Q}{\mathscr{Q}}

\newcommand{\K}{\mathbb{K}}

\newcommand{\E}{\mathscr{E}}
\newcommand{\R}{\mathscr{R}}

\newcommand{\Aalg}{\mbox{A$_{\infty}$Alg}}
\newcommand{\Aalgnu}{\mbox{A$_{\infty}$Alg}^{\mathrm{nu}}}
\newcommand{\Aalgaug}{\mbox{A$_{\infty}$Alg}^{\tiny\text{aug}}}
\newcommand{\Acat}{\mbox{A$_{\infty}$Cat}}
\newcommand{\Acatnu}{\mbox{A$_{\infty}$Cat}^{\mathrm{nu}}}
\newcommand{\Acataug}{\mbox{A$_{\infty}$Cat}^{\tiny\text{aug}}}
\newcommand{\dgcoAlg}{\mbox{DGcoAlg}}
\newcommand{\dgcoCat}{\mbox{DGcoCat}}

\newcommand{\si}{\checkmark}
\newcommand{\no}{$\times$}
\newcommand{\dvuoto}{$\emptyset$}
\newcommand{\ddue}{\raisebox{-2pt}{\xymatrix@C=14pt@R=0pt{\bullet&\bullet}}}
\newcommand{\dpar}{\raisebox{-2pt}{\xymatrix@C=22pt@R=0pt{\bullet\ar@<.6ex>[r]\ar@<-.6ex>[r]&\bullet}}}

\newcommand{\titolo}[1]{\emph{#1}}
\newcommand{\ent}[2]{\par\addvspace{2.5pt}\hangindent=1.1em\hangafter=1 $#1$:\ #2}

\theoremstyle{definition}

\newtheorem{namedthmA}{Theorem}

\newtheorem{lemma}{Lemma}[section]
\newtheorem{proposition}{Proposition}[section]
\newtheorem{corollary}{Corollary}[section]
\newtheorem{theorem}{Theorem}[section]
\newtheorem{definition}{Definition}[section]
\newtheorem{example}{Example}[section]

\begin{document}

\title{An example of the non-existence of coequalizers in the category of the $\Ain$-categories}
\author{Mattia Ornaghi}
\subjclass[2020]{18A30, 18G70, 18N40}
\thanks{The author was supported by ERC Advanced Grant-101095900-TriCatApp.}
\date{\today}

\address{\parbox{0.9\textwidth}{Universit\`a degli Studi di Milano\\
Dipartimento di Matematica\\
Via Cesare Saldini 50, 20133 Milano, Italy}}

\email{mattia12.ornaghi@gmail.com}

\begin{abstract}
We prove that the categories of strictly unital (resp.\ non unital) $\Ain$-algebras and $\Ain$-categories, 
with strictly unital (resp.\ non unital) $\Ain$-morphisms and $\Ain$-functors have no coequalizers.
\end{abstract}

\maketitle

\section{Introduction}
It is well known that the category $\DgCat$ of DG-categories (resp.\ of DG-algebras), linear over a commutative ring $\K$, has a model structure (see \cite{Tab}) whose weak equivalences are the quasi-equivalences, so that its homotopy theory is well understood. 
In particular, the first requirement in order to have a model structure is that the category be complete and cocomplete.\ 
For this reason the case of the category $\Aalg$ of $\Ain$-algebras with $\Ain$-morphisms is very delicate. 

Let us start by saying what is known and what is not.\ 
If $\K$ is a field, then Lefèvre-Hasegawa \cite[Th\'eor\`eme 1.3.3.1]{LH} endowed $\Aalg$ with a model structure without limits (see also \cite[\S1.1-2.1]{Val}).\ 
In such a structure every object is fibrant and cofibrant, the weak equivalences are the quasi isomorphisms and the fibrations are the degreewise surjections;\ 
the first technical point is the description of the pullbacks along the fibrations. 
There is a similar result for $\Ain$-categories (see \cite[Theorem 2]{CO}), where the fibrations are the $\Ain$-functors $\F$ such that $\F^1$ is a degreewise surjection and $H^0(\F)$ is an isofibration.\ This is not sufficient to provide a complete model structure (without limits) on $\aCat$ and requires
further study.

The question of the existence of equalizers and coequalizers was asked explicitly by Bruno Vallette in \cite[\S4.1]{Val} and, although the construction of the pullbacks is tricky, finding counterexamples is challenging.\ 
In 2019 we found an example of the non existence of the equalizer see \cite[\S1.5]{COS}.\ In this paper we prove:

\begin{namedthmA}\label{teoA}
Given two strictly unital $\Ain$-algebras $\A:=\big(\K\oplus\K a,m^n_{\A}(a,...,a)=0\big)$, with $\deg(a)=0$, and $\B:=\big(\K\oplus\K b_1\oplus\K b_2,m^{n}_{\B}(b_{i_1},...,b_{i_n})=0\big)$, with $\deg(b_1)=0$ and $\deg(b_2)=-1$ and a strictly unital $\Ain$-morphism $\F\colon\A\to\B$, defined as:
\begin{align*}
\F^1(a):=b_1,\quad \F^2(a,a):=b_2,\quad \F^{n\ge3}:=0 \quad \text{and} \quad \G^1(a):=0, \quad \G^{n\ge 2}:=0,
\end{align*}
the coequalizer $(\F,\G)$ does not exist in $\Aalg$, nor in $\aCat$. 
\end{namedthmA}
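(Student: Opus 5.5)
The plan is to argue by contradiction. Suppose $\Q\colon\B\to\C$ is a coequalizer of $(\F,\G)$ in $\Aalg$. We then probe $\C$ with a carefully chosen family of strictly unital test algebras and test morphisms that coequalize $\F$ and $\G$. The aim is to show that the universal property cannot hold: either some test morphism fails to factor through $\Q$, or it factors in two distinct ways.

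\textbf{Step 1: the coequalizing conditions.} Since all higher operations of $\A$ and $\B$ vanish on non-unit elements, the composite $\Q\circ\G$ is zero on $a^{\otimes n}$ for every $n\ge1$. On the other hand, expanding $(\Q\circ\F)^n(a,\dots,a)=\sum \Q^k(\F^{i_1},\dots,\F^{i_k})$ with $i_j\in\{1,2\}$ gives
\begin{align*}
\Q^1(b_1)=0,\qquad \Q^1(b_2)+\Q^2(b_1,b_1)=0,\qquad \sum_{i_1+\dots+i_k=n}\Q^k(b_{i_1},\dots,b_{i_k})=0 \quad (n\ge3).
\end{align*}
These relations are exactly what any $\Ain$-morphism $\mathscr{H}\colon\B\to\mathscr{D}$ with $\mathscr{H}\circ\F=\mathscr{H}\circ\G$ must satisfy. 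The key observation is that, unlike in the DG setting, they do not force $\mathscr{H}^1(b_2)=0$. The element $b_2$ can survive, provided a higher component compensates for it.

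\textbf{Step 2: test objects.} The first test object is $\mathscr{D}=\K\oplus\K c$ with $\deg c=-1$ and all $m^n$ vanishing on $c$. On it we take the family $\mathscr{H}_t$, $t\in\K$, given by
\begin{align*}
\mathscr{H}_t^1(b_2)=tc,\qquad \mathscr{H}_t^2(b_1,b_1)=-tc,
\end{align*}
with every other component chosen to solve the arity $\ge3$ relations of Step 1. We must check that $\mathscr{H}_t$ is an $\Ain$-morphism; this is easy, since all operations are trivial. We also use the trivial morphism $\B\to\K$. Applying the universal property to $\mathscr{H}_t$ shows that $\Q^1(b_2)$, or a suitable higher expression in $\C$, must be nonzero. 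It also gives unique $\mathscr{U}_t\colon\C\to\mathscr{D}$ with $\mathscr{U}_t\circ\Q=\mathscr{H}_t$.

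\textbf{Step 3: the contradiction (main obstacle).} This step forces an incompatibility. The plan is to exploit the freedom in higher components. In $\mathscr{D}$, or in a slightly larger test algebra such as $\K\oplus\K c\oplus\K d$ with generators in degrees $-1$ and $-2$, we produce two coequalizing morphisms $\mathscr{H},\mathscr{H}'$ that are related by an $\Ain$-automorphism of the target fixing the image. Alternatively, we produce two morphisms $\mathscr{U}\neq\mathscr{U}'$ out of $\C$ that agree after precomposition with $\Q$. The idea is that the arity-$2$ term $\mathscr{U}^2(\Q^1(b_2),\Q^1(b_1))$ and the arity-$1$ term $\mathscr{U}^1(\Q^2(b_1,b_1))$ enter the relation $\mathscr{U}\circ\Q=\mathscr{H}$ only through combinations that the relations of Step 1 make redundant. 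Concretely, we first determine how much of $\C$ is generated by the images $\Q^k(b_{i_1},\dots,b_{i_k})$. We then show that $\Q$ is either not epimorphic, which breaks uniqueness, or too rigid to let $\mathscr{H}_t$ factor. This bookkeeping of which higher components are constrained is the delicate point, and I expect the bulk of the work to lie here, in a degree-by-degree analysis in arities $2$ and $3$.

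\textbf{Step 4: passage to $\aCat$.} Finally, we view $\A$ and $\B$ as one-object $\Ain$-categories. A coequalizer in $\aCat$ of this pair would necessarily have a single object, since on objects it is the coequalizer of identity maps of a point. Moreover, every test algebra used above is itself a one-object category. Restricting to the endomorphism algebra of that object would therefore give a coequalizer in $\Aalg$, which contradicts Step 3.
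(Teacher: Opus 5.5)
\textbf{The gap is Step 3.} This is where the contradiction has to come from, and it contains no argument. You announce that $\mathscr{Q}$ will turn out to be either not epimorphic or too rigid, but you give no mechanism, and the one you hint at cannot work. For a test algebra concentrated in degrees $0,-1,-2$ (such as $\mathbb{K}\oplus\mathbb{K}c$ or $\mathbb{K}\oplus\mathbb{K}c\oplus\mathbb{K}d$), a component $\mathscr{H}^k(b_{i_1},\dots,b_{i_k})$ has degree $1-(i_1+\dots+i_k)$. So these tests only see the relations of Step 1 with $i_1+\dots+i_k\le 3$. In that range you should not expect a contradiction: the graded dimensions of $\mathscr{E}$ agree with those of a tensor coalgebra $T^c(V)$ all the way through degree $-6$. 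The obstruction the paper finds lives in degree $-7$. That is the relation with $i_1+\dots+i_k=7$, which involves $\mathscr{Q}^4,\dots,\mathscr{Q}^7$, and detecting it needs test objects with a nonzero component in degree $-6$. Chasing individual test morphisms $\mathscr{H}_t$ is not the efficient route. What you need is that the functor $M\mapsto\{\text{coequalizing morphisms }B\to M\}$ would have to be representable, and then a comparison of graded dimensions.

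\textbf{The missing idea.} Your test objects (trivial products) are of the right kind; the missing idea is to use \emph{all} of them at once, via the bar construction. Regard a complex $M$ as an algebra with $m^1=d_M$ and $m^{\ge2}=0$. By cofreeness of $T^c$, morphisms $B\to M$ in $\mbox{A$_{\infty}$Alg}^{\mathrm{nu}}$ are the same as chain maps $|\mbox{B}_{\infty}(B)|\to M[1]$. Those coequalizing $f,g$ are exactly the chain maps vanishing on $W=\mathrm{Im}(\mbox{B}_{\infty}(f)-\mbox{B}_{\infty}(g))$, i.e.\ chain maps $|\mathscr{E}|\to M[1]$ with $\mathscr{E}=\mbox{B}_{\infty}(B)/W$. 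If a coequalizer $R$ existed, Yoneda would give $|\mbox{B}_{\infty}(R)|\cong|\mathscr{E}|$, so $\mathscr{E}\cong T^c(R[1])$ as graded spaces.

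\textbf{The count.} $\mbox{B}_{\infty}(B)^{-n}$ has the words $u_{i_1}|\cdots|u_{i_k}$ with $\sum i_j=n$ as a basis, so its dimension is the Fibonacci number $\Phi_n$. $W^{-n}$ is spanned by the single element $\mbox{B}_{\infty}(f)(w^{\otimes n})$. Hence $\dim\mathscr{E}^{-n}=\Phi_n-1=0,1,2,4,7,12,20$ for $n=1,\dots,7$. Matching this with $T^c(V)$ forces $\dim V^{-2},\dots,\dim V^{-6}=1,2,3,3,1$. Then words of length $2$ and $3$ alone already span a $24$-dimensional subspace of $T^c(V)^{-7}$, whereas $\dim\mathscr{E}^{-7}=20$.

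\textbf{Step 4 and the unital case.} Your Step 4 is fine; it is the paper's observation that epimorphisms are surjective on objects. Note, however, that the paper runs the bar argument in the non-unital (equivalently, augmented) setting. Only then does it pass to \mbox{A$_{\infty}$Alg} via the left adjoint $U$ of $\mbox{DgCat}\hookrightarrow\mbox{A$_{\infty}$Cat}$. If you work directly in \mbox{A$_{\infty}$Alg}, as you propose, you must also handle a possibly non-augmented $\mathscr{C}$. For such $\mathscr{C}$, $\mbox{A$_{\infty}$Alg}(\mathscr{C},\mathbb{K}\oplus M)$ is not simply a space of linear maps out of a bar construction.
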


The proof goes through the bar construction, which identifies the non unital $\Ain$-categories with the DG-cocategories whose underlying graded cocategory is of the form $T^c(\Q)$ for some graded quiver $\Q$ (Proposition \ref{barful}); coequalizers of DG-cocategories are elementary (Lemma \ref{lemcoeq}), and the whole point is whether the resulting quotient $\E$ is of the form $T^c(\Q)$.\ Lemma \ref{ostruzione} turns this into a numerical condition on the graded dimensions of $\E$, which fails in degree $-7$.\ The strictly unital case is then deduced from the functor $U$ of \cite[Proposition 2.1]{COS} and \cite[Proposition 3.5]{COS2}, which is a left adjoint to the inclusion $i\colon\DgCat\hookrightarrow\aCat$.

Note that the category $\Aalg$ does not have coproducts (see \cite[\S4.1]{Val}); this has to do with the tensor product of $\Ain$-algebras \cite{Orn3}.\ 
On the other hand $\aCat$ admits coproducts, which are given by the disjoint union, cf. \cite[Remark 4.1]{Orn2}.\ So a consequence of Theorem \ref{teoA} is that $\aCat$ is not cocomplete. 

The following table collects what is known about the finite limits and colimits in $\Aalg^{\mathrm{nu}}$, $\Aalg$, $\Acat^{\mathrm{nu}}$ and $\Acat$:

\begin{tabular}{>{\centering\arraybackslash}m{1.5cm}|m{6.6cm}|m{6.6cm}}
\textsc{shape} & \textsc{colimit} & \textsc{limit}\\\hline
&&\\[-7pt]
\dvuoto
&\titolo{initial object}
 \ent{\Aalgnu}{\si\ the zero algebra $0$}
 \ent{\Aalg}{\si\ the algebra $\K$}
 \ent{\Acatnu, \Acat}{\si\ the empty $\Ain$-category}
&\titolo{terminal object}
 \ent{\Aalgnu,\ \Aalg}{\si\ the zero algebra $0$}
 \ent{\Acatnu,\ \Acat}{\si\ one object with zero endomorphisms}
\\[8pt]\hline
&&\\[-7pt]
\ddue
&\titolo{coproduct}
 \ent{\Aalgnu}{\no\ already for the two one dimensional trivial algebras, \cite[\S4.1]{Val}}
 \ent{\Aalg}{\no\ the same pair, by the method of Theorem A}
 \ent{\Acatnu,\ \Acat}{\si\ the disjoint union, \cite[Remark 4.1]{Orn2}}
&\titolo{product}
 \ent{\Aalgnu,\ \Aalg}{\si\ $A\oplus B$ with the componentwise $m^n$, the unit is $(1_A,1_B)$ in the unital case}
 \ent{\Acatnu,\ \Acat}{\si\ the product of the sets of objects, the hom-spaces are the direct sums}
\\[8pt]\hline
&&\\[-7pt]
\dpar
&\titolo{coequalizer}
 \ent{\Aalgnu}{\no\ Theorem \ref{teoprincipale}}
 \ent{\Acatnu}{\no\ Proof of Theorem \ref{teoA}}
 \ent{\Aalg,\ \Acat}{\no\ Theorem \ref{teoA}}
&\titolo{equalizer}
 \ent{\Aalgnu,\ \Aalg}{\no\ in general, \cite[\S1.5]{COS}; \si\ under the hypotheses of \cite{LH}}
 \ent{\Acatnu,\ \Acat}{\no\ in general, \cite[\S1.5]{COS}; \si\ under the hypotheses of \cite{CO}}
\end{tabular}
\\

Two observations: first we can see an $\Ain$-category at the $\infty$-level via the $\Ain$-nerve (see \cite{Orn5}), 
so the next step is to define the derived limits and colimits where it is possible. 
To conclude, recall that the category of $\Ain$-categories with strict $\Ain$-functors is complete and cocomplete and has a model structure \cite{Orn4} whose weak equivalences are the quasi-equivalences.\ So it would be interesting to investigate the cylinder object for $\Ain$-categories, since the coproducts exist in the category of $\Ain$-categories.\ Note that the path object and the right homotopy are studied in detail; see \cite{Orn6}.

\subsection*{Acknowledgements}
I thank Alberto Canonaco and Bruno Vallette for sharing their interest in this article.

\section{Background and the obstruction}\label{secnot}
We fix a commutative ring $\K$, we denote by $\mbox{Ch}_\K$ the category of complexes of graded $\K$-modules with the degree zero chain maps.\ 
We use cohomological conventions: differentials have degree $+1$, and we write $M[1]$ for the shift, i.e.\ $\deg(m[1])=\deg(m)-1$ for $m\in M$, so that $(M[k])^j=M^{j+k}$.

\begin{definition}\label{defalg}
A \emph{non unital $\Ain$-category} is a graded quiver $\A$ together with multilinear maps 
\begin{align*}
m^n_{\A}\colon\A(x_{n-1},x_n)\otimes...\otimes\A(x_0,x_1)\to\A(x_0,x_n)[2-n],
\end{align*}
for any $n\ge1$ and $x_0,...,x_n\in\text{Ob}(\A)$, such that
\[
\sum_{m=1}^{n}\ \sum_{d=0}^{n-m}(-1)^{\dagger_d}\, m^{n-m+1}_{\A}\bigl(f_n,\dots,f_{d+m+1},m^m_{\A}(f_{d+m},\dots,f_{d+1}),f_d,\dots,f_1\bigr)=0,
\]
where $\dagger_d=\mbox{deg}(f_d)+\dots+\mbox{deg}(f_1)-d$.
\end{definition}

\begin{definition}\label{defmor}
A \emph{non unital $\Ain$-functor} $\F\colon\A\to\B$ is a family $\F=\{\F^n\}_{n\ge0}$ such that 
$$\F^0\colon\text{Ob}(\A)\to\text{Ob}(\B)$$ 
is a morphism of sets, and $\F^{n\ge1}$ is a map of graded modules 
\begin{align*}
\F^n\colon\A(x_{n-1},x_n)\otimes...\otimes\A(x_0,x_1)\to\B(x_0,x_n)[1-n]
\end{align*}
for any $n\ge1$ and $x_0,...,x_n\in\text{Ob}(\A)$, such that
\begin{align*}
\sum_{r\ge 1}\ \sum_{s_1+...+s_r=n}m_{\B}^{r}\bigl(&\F^{s_r}(f_n,...,f_{n-s_r+1}),...,\F^{s_1}(f_{s_1},...,f_1)\bigr)=\\
=&\sum_{m=1}^{n}\sum_{k=0}^{n-m}(-1)^{\dagger_k}\F^{n-m+1}\bigl( f_n,...,f_{k+m+1},m^m_{\A}(f_{k+m},...,f_{k+1}),f_k,...,f_1\bigr).
\end{align*}
We say that an $\Ain$-functor $\F$ is \emph{strict} if $\F^{n}=0$ for every $n\ge 2$.\
The composition of two $\Ain$-functors is given by the formula:
\begin{equation*}
(\G\cdot\F)^n(f_n,\dots,f_1)=\sum_{i_r+\dots+i_1=n}\G^r\bigl(\F^{i_r}(f_n,\dots,f_{n-i_r}),\dots,\F^{i_1}(f_{i_1},\dots,f_1)\bigr).
\end{equation*}
\end{definition}

We say that $\A$ is \emph{strictly unital} if there exists a closed morphism $1_x\in\A(x,x)$, for any $x\in\text{Obj}(\A)$, such that 
\begin{align*}
m^2_{\A}(1,f)=m^2_{\A}(f,1)=f,\qquad m^{n\not=2}(...,1,...)=0.
\end{align*}
In the same vein, an $\Ain$-functor $\F$ is \emph{strictly unital} if
\begin{align*}
\F^1(1_x)=1_{\F^0(x)},\qquad \F^{n\not=1}(...,1,...)=0.
\end{align*}
A strictly unital $\Ain$-category $\A$ is \emph{augmented} if it is endowed 
with a strict and strictly unital $\Ain$-functor $\epsilon_{\A}\colon\A\to E_{\tiny\text{Ob}(\A)}$ which is the
identity on objects, where $E_{\tiny\text{Ob}(\A)}$ is the graded quiver whose hom-spaces are given
\begin{equation*}
E_{\tiny\text{Ob}(\A)}(x,y):=\begin{cases}
0,\qquad\text{if $x\not=y$}\\
\K,\qquad\text{if $x=y$}.
\end{cases}
\end{equation*}
An $\Ain$-category with one object is called \emph{$\Ain$-algebra}, an $\Ain$-functor between two $\Ain$-algebras is said \emph{$\Ain$-morphism}.\ 
We denote by $\Aalg$, $\Aalgnu$, $\Aalgaug$ (resp. $\Acat$, $\Acatnu$, $\Acataug$) the categories of strictly unital, non unital, augmented $\Ain$-algebras (resp. $\Ain$-categories) with the corresponding morphisms (resp. functors).\ It is not difficult to see that the categories $\Aalgnu$ and $\Aalgaug$ (resp. $\Acatnu$ and $\Acataug$) are equivalent, by $A\mapsto A_{+}:=\K\oplus A$; note that $\Aalgaug$ is a non full subcategory of $\Aalg$.\ A thorough reference for $\Ain$-categories can be found in \cite{Sei}.

\subsection{Bar construction}
We recall some notions of \cite[\S 2.2]{Orn1}, \cite[\S 1.2]{COS} and \cite{COS2}.\
Let $\Q$ be a graded quiver, the (reduced) tensor cocategory $\Br(\Q)$ has the same objects as $\Q$ and the morphisms are given by
\begin{align*}
\Br(\Q)(x,y)&:=T^c\bigl(\Q[1]\bigr)(x,y)\\
&=\Q(x,y)[1]\bigoplus_{n\ge 1} \bigoplus_{z_1,...,z_n\in\tiny\text{Ob}(\Q)}\Q(z_n,y)[1]\otimes \Q(z_{n-1},z_n)[1]\otimes...\otimes\Q(z_1,z_2)[1]\otimes\Q(x,z_1)[1].
\end{align*}
The coproduct $\Delta$ is given by \cite[Example 1.15]{COS}.\ It is not hard to prove that the forgetful functor 
\begin{align*}
|\text{-}|\colon \text{(graded) coCat} \to \text{(graded) Quivers}
\end{align*}
is left adjoint to the functor $T^c$, see \cite[Proposition 1.17]{COS}.\ 
In particular, we have a (natural) bijection:
\begin{equation}\label{f1}
\xymatrix{
\{\mbox{Functors of (graded) CoCategories}\colon\C\to \Br(\Q)\}\ar@_{<->}[r]^-{1:1}&\{\mbox{Functors of (graded) quivers}\colon |\C|\to\Q\}.
}
\end{equation}
On the other hand, we have the bijection of sets:
\begin{equation}\label{f2}
\xymatrix{
\{\Ain\mbox{-structures on }\Q\}\ar@_{<->}[r]^-{1:1}&\{\mbox{DG-structures on }\Br(\Q)\},
}
\end{equation}
see \cite[\S 2.1, Theorem 2.1]{Orn2}.\ 
Given two $\Ain$-categories $\A$ and $\B$, we have:
\begin{equation}\label{f3}
\xymatrix{
\{\Ain\mbox{-functors: }\A\to\B\}\ar@_{<->}[r]^-{1:1}&\{\mbox{Functor of DG-cocategories}\colon \Br(\A)\to\Br(\B)\}.
}
\end{equation}
Note that the DG-functor $\Br(\F)\colon\Br(\A)\to\Br(\B)$ is defined as follows:
\begin{equation}\label{f4}
\Br(\F)\bigl( a_n[1]\otimes...\otimes a_1[1]\bigr)=\sum_{k\ge1}\ \sum_{i_1+\dots+i_k=n}\F^{i_1}(a_n,...,a_{n-i_1+1})[1]\otimes\dots\otimes\F^{i_k}(a_{i_k},...,a_1)[1].
\end{equation}

\begin{proposition}[{\cite[Remark 1.20]{COS}}]\label{barful}
The bar construction 
\begin{align*}
\Br\colon\Acatnu\to\dgcoCat
\end{align*}
is a fully faithful functor.\ The essential image of $\Br$ consists of those DG-cocategories $\C$ isomorphic to $T^c(\A)$ for some $\Ain$-category $\A$.\ 
In particular, it means that the category $\Acatnu$ is equivalent to a full subcategory of DG-cocategories.\ 
Note that we used this characterization in \cite[Theorem 2]{CO} to prove that $\Aalgnu$ admits equalizers under certain hypotheses.
\end{proposition}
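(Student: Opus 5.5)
The plan is to build $\Br$ explicitly from the bijections \eqref{f1}, \eqref{f2} and \eqref{f3}. Then I check functoriality, read off full faithfulness from \eqref{f3}, and identify the essential image using \eqref{f2}.

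\emph{Step 1: the functor.} On objects, a non unital $\Ain$-category $\A$ is sent to the graded cocategory $T^c(\A[1])$. It has the same objects as $\A$, and its differential is the coderivation corresponding to $\{m^n_{\A}\}$ under \eqref{f2}. By \eqref{f1}, a coderivation of $T^c(\A[1])$ is determined by its corestriction to $\A[1]$, which here is $\sum_n m^n_{\A}$ after the shift. The square of this coderivation is again a coderivation, so it vanishes exactly when its corestriction vanishes. That corestriction is the left-hand side of the identity in Definition \ref{defalg}, so the $\Ain$-relations say precisely that $d^2=0$.

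On morphisms, an $\Ain$-functor $\F$ is sent to the cofunctor $\Br(\F)$ of \eqref{f4}. This is the unique cofunctor whose corestriction to $\B[1]$ is $\sum_n\F^n$, again by \eqref{f1}. The maps $\Br(\F)\circ d_{\A}$ and $d_{\B}\circ\Br(\F)$ are both $\Br(\F)$-coderivations, so they agree iff their corestrictions agree. Those corestrictions are exactly the two sides of the equation in Definition \ref{defmor}.

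\emph{Step 2: functoriality.} I will compare corestrictions once more. The corestriction of $\Br(\G)\circ\Br(\F)$ is obtained by applying $\sum_r\G^r$ to the terms of \eqref{f4}. This is exactly the composition formula $(\G\cdot\F)^n$ of Definition \ref{defmor}, so $\Br(\G\cdot\F)=\Br(\G)\circ\Br(\F)$. Identities are handled the same way, since the identity $\Ain$-functor has $\F^1=\Id$ and $\F^{n\ge2}=0$.

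\emph{Step 3: full faithfulness.} This is the content of \eqref{f3}. A cofunctor $T^c(\A[1])\to T^c(\B[1])$ is the same as a map of quivers $|T^c(\A[1])|\to\B[1]$ by \eqref{f1}. Such a map is a family $\{\F^n\}$ together with the map on objects $\F^0$, since a cofunctor carries its own map on objects. By Step 1, commuting with the differentials is equivalent to the $\Ain$-functor equations, so $\Br$ is bijective on hom-sets.

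\emph{Step 4: essential image.} Let $\C$ be a DG-cocategory and $\phi\colon T^c(\Q[1])\cong\C$ an isomorphism of graded cocategories, for some graded quiver $\Q$. Transporting the differential of $\C$ along $\phi$ gives a DG-structure on $T^c(\Q[1])$. By \eqref{f2} this corresponds to an $\Ain$-structure on $\Q$; writing $\A$ for the resulting $\Ain$-category, $\phi$ becomes an isomorphism of DG-cocategories $\Br(\A)\cong\C$. Conversely, every $\Br(\A)$ has this form by construction. Hence the image is equivalent to $\Acatnu$.

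The step I expect to need the most care is the sign bookkeeping in Steps 1–2. One must check that the shift $[1]$ turns the sign $(-1)^{\dagger_d}$ of Definition \ref{defalg} into the Koszul signs of a coderivation. One must also check that the absence of signs in \eqref{f4} and in the composition formula is consistent with that convention. I would fix the suspension isomorphism $s\colon\A\to\A[1]$ once, and verify the correspondence on $n=1,2,3$ before asserting it in general.
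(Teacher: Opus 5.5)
Your proposal is correct and follows the route the paper relies on. The paper gives no separate proof: it quotes the result from [COS] and rests it on the bijections (\ref{f1})--(\ref{f3}). You assemble exactly those into functoriality, full faithfulness via (\ref{f3}), and the essential image via (\ref{f2}), after transporting the differential along a graded-cocategory isomorphism.

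One small correction: the uniqueness of coderivations, and of $\Br(\F)$-relative coderivations, from their corestrictions is the coderivation analogue of the cofree property, not (\ref{f1}) itself. It holds because $T^c(\A[1])$ is conilpotent, so its primitives are exactly the length-one part.
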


\subsection{The obstruction}\label{seccoeq}
In this subsection we give a criterion to prove that a diagram in $\Aalgnu$ does not have a coequalizer.\ 

\begin{lemma}\label{lemcoeq}
Let $\mathsf{F},\mathsf{G}\colon\mathsf{C}\to\mathsf{D}$ be two morphisms in $\dgcoAlg$ and let $W:=\mbox{Im}(\mathsf{F}-\mathsf{G})\subseteq\mathsf{D}$.\
Then the projection $\mathsf{D}\to\mathsf{D}/W$ is the coequalizer of $\mathsf{F}$ and $\mathsf{G}$ in $\dgcoAlg$.
\end{lemma}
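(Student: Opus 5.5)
The plan is to show three things about $W:=\mathrm{Im}(\mathsf F-\mathsf G)$: it is a DG-coideal of $\mathsf D$, so that $\mathsf D/W$ inherits a DG-coalgebra structure and the projection $\pi\colon\mathsf D\to\mathsf D/W$ is a morphism; $\pi$ coequalizes $\mathsf F$ and $\mathsf G$; and $\pi$ has the universal property. The key fact is that a morphism of DG-coalgebras is a chain map compatible with $\Delta$ (and with counits, if present). Consequently $\mathsf F-\mathsf G$ is a chain map, although it is not a coalgebra map.

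\emph{Step 1: $W$ is a subcomplex.} Since $d_{\mathsf D}(\mathsf F-\mathsf G)=(\mathsf F-\mathsf G)d_{\mathsf C}$, the image $W$ is closed under $d$.

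\emph{Step 2: $W$ is a coideal (the main step).} For $c\in\mathsf C$, write $\Delta c=\sum c_{(1)}\otimes c_{(2)}$. Then
\[
\Delta(\mathsf F c-\mathsf G c)=\sum \mathsf F c_{(1)}\otimes \mathsf F c_{(2)}-\mathsf G c_{(1)}\otimes \mathsf G c_{(2)}=\sum (\mathsf F-\mathsf G)c_{(1)}\otimes \mathsf F c_{(2)}+\mathsf G c_{(1)}\otimes(\mathsf F-\mathsf G)c_{(2)},
\]
which lies in $W\otimes\mathsf D+\mathsf D\otimes W$. If counits are present, $\varepsilon(\mathsf F-\mathsf G)=\varepsilon-\varepsilon=0$. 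For the non-counital reduced setting, which is the one relevant to $\Br$, only the first condition is needed. Hence $\Delta$ descends to
\[
\mathsf D/W\to(\mathsf D\otimes\mathsf D)/(W\otimes\mathsf D+\mathsf D\otimes W)\cong\mathsf D/W\otimes\mathsf D/W,
\]
and $d$ descends as well. Over a general commutative ring $\K$, this isomorphism holds by right exactness of $\otimes$; this is the one point needing care. Coassociativity and the Leibniz rule pass to the quotient because $\pi$ is surjective. By construction $\pi\mathsf F=\pi\mathsf G$.

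\emph{Step 3: universal property.} Let $\mathsf H\colon\mathsf D\to\mathsf E$ be a DG-coalgebra morphism with $\mathsf H\mathsf F=\mathsf H\mathsf G$. Then $\mathsf H$ vanishes on $W$, so it factors uniquely as a linear map $\bar{\mathsf H}\colon\mathsf D/W\to\mathsf E$. This $\bar{\mathsf H}$ commutes with $d$ and $\Delta$ because $\mathsf H$ does and $\pi$ is an epimorphism of modules. Uniqueness follows from the surjectivity of $\pi$. The same argument works verbatim for DG-cocategories: one fixes the object map as the coequalizer of sets and takes the quotient hom-wise. In our application the object sets coincide, so nothing changes there.
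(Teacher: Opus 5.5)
Your proof is correct. The paper states this lemma without proof; the introduction only calls coequalizers of DG-cocategories \emph{elementary}. Your argument is exactly the standard verification being taken for granted: $W$ is $d$-stable, $W$ is a coideal via $\Delta(\mathsf F-\mathsf G)=\bigl((\mathsf F-\mathsf G)\otimes\mathsf F+\mathsf G\otimes(\mathsf F-\mathsf G)\bigr)\Delta$, and the universal property descends through the surjection $\pi$.

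One small caveat, which affects only your closing remark on cocategories. If the object maps actually identify objects, quotienting hom-wise is not enough. You must first sum the hom-spaces over the fibres of the coequalizer of object sets, i.e.\ take $\bigoplus_{x\mapsto x',\,y\mapsto y'}\mathsf D(x,y)$, and only then divide by the image of $\mathsf F-\mathsf G$. This does not affect the lemma as stated, nor its use for one-object categories in the paper.
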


\begin{corollary}\label{corrappr}
Let $f,g\colon A\to B$ be two $\Ain$-morphisms and set
\begin{equation}
W:=\mbox{Im}\bigl(\Br(f)-\Br(g)\bigr)\subseteq\Br(B),\qquad \E:=\Br(B)/W .
\end{equation}
Then, for every $\Ain$-algebra $C$, there is a bijection, natural in $C$,
\begin{equation}
\{h\colon B\to C\ \mbox{ in }\Aalgnu\ :\ h\cdot f=h\cdot g\}\ \longleftrightarrow\ \mbox{Hom}_{\tiny\dgcoAlg}\bigl(\E,\Br(C)\bigr).
\end{equation}
\end{corollary}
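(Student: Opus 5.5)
The plan is to compose the bijection of Lemma \ref{lemcoeq} (applied to $\Br(f),\Br(g)$) with the fully faithful bar construction of Proposition \ref{barful} and the bijection (\ref{f3}).

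First, since $\Br$ is a functor, $h\cdot f=h\cdot g$ holds in $\Aalgnu$ if and only if $\Br(h)\cdot\Br(f)=\Br(h)\cdot\Br(g)$. Faithfulness of $\Br$ gives one direction, and functoriality gives the other. By (\ref{f3}), $h\mapsto\Br(h)$ is a bijection between $\Ain$-morphisms $B\to C$ and DG-coalgebra morphisms $\Br(B)\to\Br(C)$. Hence the left-hand set is in bijection with
\[
\{\phi\colon\Br(B)\to\Br(C)\ \text{in}\ \dgcoAlg\ :\ \phi\cdot\Br(f)=\phi\cdot\Br(g)\}.
\]

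Next, Lemma \ref{lemcoeq} states that the projection $\pi\colon\Br(B)\to\E=\Br(B)/W$ is the coequalizer of $\Br(f)$ and $\Br(g)$ in $\dgcoAlg$, with $W=\mathrm{Im}(\Br(f)-\Br(g))$. By the universal property, the set above is in bijection with $\mathrm{Hom}_{\dgcoAlg}(\E,\Br(C))$ via $\psi\mapsto\psi\cdot\pi$. Composing the two bijections gives the map $h\mapsto$ the unique $\bar h$ with $\bar h\cdot\pi=\Br(h)$. This makes sense because $\Br(h)$ vanishes on $W$: indeed $\Br(h)(\Br(f)-\Br(g))=\Br(hf)-\Br(hg)=0$.

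For naturality in $C$, take an $\Ain$-morphism $k\colon C\to C'$. On the left it acts by $h\mapsto kh$, and on the right by $\psi\mapsto\Br(k)\psi$. We need $\overline{kh}=\Br(k)\bar h$, which follows from $\Br(k)\bar h\pi=\Br(k)\Br(h)=\Br(kh)$ together with uniqueness, since $\pi$ is an epimorphism.

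The only real subtlety lies inside Lemma \ref{lemcoeq}, which we assume: $W$ must be a DG-coideal, so that $\E$ is a DG-coalgebra. Surjectivity of $\pi$ then makes $\bar h$ unique. I expect no further obstacle. One caveat is that the paper's notation $\dgcoAlg$ should be read as the one-object version of $\dgcoCat$, with $\Br(B)$ the reduced tensor coalgebra $T^c(B[1])$, so that (\ref{f3}) applies verbatim.
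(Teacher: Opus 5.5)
Your proposal is correct and follows the paper's own route: the paper's proof is the one line ``combine Proposition~\ref{barful} with Lemma~\ref{lemcoeq}''. You have made explicit the steps it leaves implicit, namely full faithfulness of $\Br$ via (\ref{f3}), the universal property of the projection $\Br(B)\to\E$, and naturality in $C$.
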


\begin{proof}
Combine Proposition \ref{barful} with Lemma \ref{lemcoeq}.
\end{proof}

Let $(M,d_M)$ be a complex, we can consider $M$ as a non unital $\Ain$-algebra such that $m^1_M=d_M$ and $m^{n\ge2}_{M}=0$.\ In particular, by (\ref{f1}) we have the natural isomorphism
\begin{equation}\label{lemtest}
\mbox{Hom}_{\tiny\dgcoAlg}\bigl(\mathsf{C},\Br(M)\bigr)\ \longleftrightarrow\ \mbox{Hom}_{\mbox{\tiny Ch}_K}\bigl(|\mathsf{C}|,M[1]\bigr),
\end{equation}
for every $\mathsf{C}\in\dgcoAlg$.

\begin{lemma}\label{ostruzione}
Let $f,g,\E$ be as in Corollary \ref{corrappr} and assume that the coequalizer $\R$ of $f$ and $g$ exists in $\Aalgnu$.\
We have an isomorphism $|\Br(\R)|\cong|\E|$ in $\mbox{Ch}_{\K}$.\ In particular: $\E\cong\bigoplus_{n\ge1}(\R[1])^{\otimes n}$ as graded modules.
\end{lemma}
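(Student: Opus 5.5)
The approach is a Yoneda argument: compare the functors that $|\Br(\R)|$ and $|\E|$ represent on $\mbox{Ch}_{\K}$, using only test objects of the form $C=M$, a complex viewed as an $\Ain$-algebra with $m^{n\ge2}_M=0$.

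Let $p\colon B\to\R$ be the coequalizer in $\Aalgnu$. For every $\Ain$-algebra $C$, the universal property gives a bijection natural in $C$ between $\mbox{Hom}_{\Aalgnu}(\R,C)$ and $\{h\colon B\to C : h\cdot f=h\cdot g\}$. By Corollary \ref{corrappr}, the latter set is naturally in bijection with $\mbox{Hom}_{\dgcoAlg}(\E,\Br(C))$. By Proposition \ref{barful}, $\mbox{Hom}_{\Aalgnu}(\R,C)\cong\mbox{Hom}_{\dgcoAlg}(\Br(\R),\Br(C))$. Hence
\[
\mbox{Hom}_{\dgcoAlg}\bigl(\Br(\R),\Br(C)\bigr)\cong\mbox{Hom}_{\dgcoAlg}\bigl(\E,\Br(C)\bigr),
\]
naturally in $C\in\Aalgnu$.

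Now specialise to $C=M$ and apply \eqref{lemtest} on both sides. This gives
\[
\mbox{Hom}_{\mbox{\tiny Ch}_K}\bigl(|\Br(\R)|,M[1]\bigr)\cong\mbox{Hom}_{\mbox{\tiny Ch}_K}\bigl(|\E|,M[1]\bigr).
\]
The point to check is naturality in $M$ with respect to all chain maps. Any chain map $\phi\colon M\to M'$ is a strict $\Ain$-morphism between these trivial algebras, so $\Br(\phi)$ makes sense. The identification \eqref{lemtest}, coming from the adjunction \eqref{f1}, sends a DG-cofunctor $\mathsf{C}\to\Br(M)$ to its corestriction followed by the projection $\Br(M)\to M[1]$. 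This projection is compatible with $\Br(\phi)$, because $\Br(\phi)$ acts as $\phi^{\otimes n}$ on tensor length $n$ by \eqref{f4}. Consequently the composite bijection is natural in $M$ ranging over all of $\mbox{Ch}_{\K}$.

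Since $M\mapsto M[1]$ is an automorphism of $\mbox{Ch}_{\K}$, we have a natural isomorphism of representable functors $\mbox{Hom}(|\Br(\R)|,-)\cong\mbox{Hom}(|\E|,-)$ on $\mbox{Ch}_{\K}$. The Yoneda lemma then yields $|\Br(\R)|\cong|\E|$ in $\mbox{Ch}_{\K}$.

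Forgetting differentials, $|\Br(\R)|=T^c(\R[1])=\bigoplus_{n\ge1}(\R[1])^{\otimes n}$ as a graded module; for algebras there is a single object, so the tensors are plain. This gives the second claim.

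The main obstacle is the naturality bookkeeping: the chain of bijections must be natural in $M$ with respect to \emph{all} chain maps, not only isomorphisms. I would verify this by tracing a single chain map through \eqref{f1}, \eqref{f3} and \eqref{f4}. The other identifications are formal from the universal properties.
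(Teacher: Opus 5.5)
Your proposal is correct and is essentially the paper's proof: chain $\mbox{Hom}(\Br(\R),\Br(C))\cong\Aalgnu(\R,C)\cong\{h\colon B\to C : h\cdot f=h\cdot g\}\cong\mbox{Hom}(\E,\Br(C))$, specialise $C$ to a complex viewed as a trivial $\Ain$-algebra, apply \eqref{lemtest}, and conclude by Yoneda. The differences are cosmetic. The paper tests against $C=M[-1]$ to land directly in $\mbox{Hom}(-,M)$, whereas you test against $C=M$ and absorb the shift afterwards, and you spell out the naturality in $M$ that the paper simply asserts.
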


\begin{proof}
By Corollary \ref{corrappr} there is a bijection 
\begin{equation}\label{equiz}
\mbox{Hom}_{\tiny\dgcoAlg}(\Br(\R),\Br(C))\simeq\Aalgnu(\R,C )\simeq\{ h\colon B\to C \text{ s.t. $h\circ f=h\circ g$} \}\simeq\mbox{Hom}_{\tiny\dgcoAlg}(\E,\Br(C))
\end{equation}
natural in $C\in\Aalg$.\ 
Let $M$ be a chain complex and we take $C=M[-1]$ (considered as non unital $\Ain$-algebra), then by (\ref{equiz}) and (\ref{lemtest}) we have an isomorphism natural in $M$
\begin{equation*}
\mbox{Hom}_{\mbox{\tiny Ch}_{\K}}\bigl(|\Br(\R)|,M\bigr)\cong\mbox{Hom}_{\mbox{\tiny Ch}_{\K}}\bigl(|\E|,M\bigr).
\end{equation*}
So, by the Yoneda lemma, we have $|\Br(\R)|\cong|\E|$.
\end{proof}

\section{The counterexample}\label{secesempio}
Now we prove Theorem \ref{teoA} making use of Lemma \ref{ostruzione}.\ In this subsection $\K$ is a field.

\begin{definition}\label{defesempio}
We denote by $A$ and $B$ the non unital $\Ain$-algebras defined as follows:
\begin{itemize}
\item[1.] $A:= \K a$ such that $\deg(a)=0$, and $m^{n\ge1}_{\A}=0$;
\item[2.] $B:=\K a\oplus\K b$ such that $\deg(b_1)=0$, $\deg(b_2)=-1$, and $m^{n\ge1}_{\B}=0$.
\end{itemize}
We denote by $f,g\colon A\to B$ the $\Ain$-morphisms defined as follows:
\begin{itemize}
\item[1.] $f^1(a)=b_1$, $f^2(a,a)=b_2$ and $f^{n\ge 3}=0$;
\item[2.] $g^{n\ge 1}=0$.
\end{itemize}
\end{definition}

Note that $A_{+}=\A$, $B_{+}=\B$, $f_{+}=\F$ and $g_{+}=\G$ are the data of Theorem \ref{teoA} and $f$ is \emph{not} strict.\ 
We define $w:=a[1]$ and $u_i:=b_i[1]$, so that
\[
A[1]=\K w,\ \deg(w)=-1,\qquad B[1]=\K u_1\oplus \K u_2,\ \deg(u_1)=-1,\ \deg(u_2)=-2,
\]
We write $w|w:=w\otimes w$, so $\Br(A)$ has basis 
\begin{align*}
\{w|...|w:=w^{\otimes n}\}_{n\ge1}\text{, with }\deg(w^{\otimes n})=-n.
\end{align*}
On the other hand, the \emph{words} in the letters $u_1,u_2$ are a basis for $\Br(B)$. 
For a tuple $\underline{i}=(i_1,\dots,i_k)$ with $i_j\in\{1,2\}$ we write
\begin{align*}
u_{\underline i}:=u_{i_1}|\dots| u_{i_k}\in B[1]^{\otimes k}\subseteq\Br(B),\qquad
\deg(u_{\underline i})=-(i_1+\dots+i_k).
\end{align*}
Finally we set
\begin{align*}
\Phi_n:=\# \{\underline i\text{ such that } i_1+\dots+i_k=n\text{ and $i_j\in\{ 1,2\}$}\}.
\end{align*}
We have $\Phi_1=1$, $\Phi_2=2$, note that $\dim_\K\big(\Br(B)^{-n}\big)=\Phi_n$ and $\Phi_n=\Phi_{n-1}+\Phi_{n-2}$.\\
For example $\Br(B)^{-1}:=<u_1>$, $\Br(B)^{-2}=<u_1|u_1,u_2>$, $\Br(B)^{-3}=<u_1|u_1|u_1,u_2|u_1,u_1|u_2>$.\

The following lemma is straightforward.
\begin{lemma}\label{lemW}
Let $f,g$ be as in Definition \ref{defesempio} and let $W=\mbox{Im}(\Br(f)-\Br(g))$ be as in Corollary \ref{corrappr}.\
Then, for every $n\ge1$,
\begin{align*}
\Br(f)\bigl(w^{\otimes n}\bigr)=\sum_{k\ge1}\sum_{\substack{\underline i=(i_1,\dots,i_k)\\ i_1+\dots+i_k=n}}u_{\underline i}\ \in\ \Br(\B)^{-n}.
\end{align*}
Moreover $W=\bigl\langle\, \Br(\F)(w^{\otimes n})\,\bigr\rangle$ and $\dim_\K(W^{-n})=1$ for every $n\ge1$.\ In particular, $\dim_\K(\E^{-n})=\Phi_n-1$.
\end{lemma}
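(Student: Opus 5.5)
The plan is to compute $\Br(f)$ and $\Br(g)$ directly from formula (\ref{f4}) and then read off $W$ degree by degree.

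\emph{Computing $\Br(f)$.} Since $f^1(a)=b_1$, $f^2(a,a)=b_2$ and $f^{n\ge3}=0$, we have $f^1(a)[1]=u_1$ and $f^2(a,a)[1]=u_2$, while every term of (\ref{f4}) containing some $f^{i_j}$ with $i_j\ge3$ vanishes. So (\ref{f4}) applied to $w^{\otimes n}=a[1]\otimes\dots\otimes a[1]$ becomes a sum over all compositions $(i_1,\dots,i_k)$ of $n$ with parts in $\{1,2\}$. The term indexed by such a composition is exactly the word $u_{\underline i}$. Every input is the same element $a$, so no combinatorial multiplicities appear.

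The only point needing care is the Koszul signs hidden in (\ref{f4}), which the paper's formula already suppresses. I would check that, under the convention implicit in (\ref{f4}), every term has coefficient $+1$. Even if signs did appear, each word would carry a coefficient $\pm1$, and this does not affect the dimension count below.

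\emph{Computing $\Br(g)$ and $W$.} Since $g^{n}=0$ for all $n$, formula (\ref{f4}) gives $\Br(g)=0$. Hence $\Br(f)-\Br(g)=\Br(f)$, and $W$ is the image of $\Br(f)$. Because $\{w^{\otimes n}\}_{n\ge 1}$ is a basis of $\Br(A)$, this image is spanned by the elements $\Br(f)(w^{\otimes n})$.

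Each $\Br(f)(w^{\otimes n})$ is homogeneous of degree $-n$. It is nonzero, since the word $u_1|\dots|u_1$ of length $n$ occurs in it with coefficient $1$ and the words $u_{\underline i}$ form a basis of $\Br(B)$. Consequently $W^{-n}$ is the line spanned by $\Br(f)(w^{\otimes n})$, so $\dim_\K W^{-n}=1$.

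\emph{Dimension of $\E$.} Finally $\E^{-n}=\Br(B)^{-n}/W^{-n}$, because the quotient is taken degreewise. Using $\dim_\K\Br(B)^{-n}=\Phi_n$, this gives $\dim_\K\E^{-n}=\Phi_n-1$.

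There is no real obstacle in this argument. The only step requiring attention is the sign bookkeeping in (\ref{f4}), and as noted it is irrelevant to the dimension statement.
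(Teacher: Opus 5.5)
Your proposal is correct and follows the only natural route, the direct computation from (\ref{f4}); the paper calls the lemma straightforward and writes out no proof. Concretely, $\Br(g)=0$, each composition of $n$ into parts in $\{1,2\}$ contributes exactly the word $u_{\underline i}$, and $W$ is spanned by one nonzero homogeneous vector in each degree $-n$, which gives $\dim_\K W^{-n}=1$ and $\dim_\K\E^{-n}=\Phi_n-1$; your remark that any Koszul signs would only turn coefficients into $\pm1$ and leave the dimension count unchanged is also right.
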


For instance
\begin{align*}
\Br(f)(w)=u_1,\quad \Br(f)(w^{\otimes2})=u_1| u_1+u_2,\quad
\Br(f)(w^{\otimes3})=u_1| u_1| u_1+u_2| u_1+u_1| u_2
\end{align*}
and
\begin{align*}
\Br(f)(w^{\otimes4})=u_1|u_1|u_1|u_1+u_2| u_1| u_1+u_1| u_2| u_1+u_1| u_1| u_2+u_2| u_2.
\end{align*}

\begin{example}\label{esE3}
Let us compute $\dim_\K(\E^{-n})$ for $n=3$: the homogeneous component $\Br(B)^{-3}$ has basis $u_1\otimes u_1\otimes u_1,\ u_2\otimes u_1,\ u_1\otimes u_2$, so $\dim_\K(\Br(B)^{-3})=\Phi_3=3$. On the other hand, $W^{-3}$ is spanned by $u_1|u_1|u_1+u_2|u_1+u_1|u_2$.\
So $\E^{-3}=\Br(B)^{-3}/W^{-3}$ has dimension 2, a basis is given e.g. by $\{ u_2\otimes u_1,u_1\otimes u_2\}$.
\end{example}

\begin{theorem}\label{teoprincipale}
The pair $\F,\G\colon\A\rightrightarrows\B$ of Definition \ref{defesempio} has no coequalizer in $\Aalgnu$.
\end{theorem}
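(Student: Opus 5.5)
The plan is to argue by contradiction, using Lemma \ref{ostruzione} to turn the existence of a coequalizer into a numerical identity between graded dimensions, and then to show that this identity forces a negative dimension.

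\emph{Setting up the identity.} Assume the coequalizer $\R$ of $f,g$ exists in $\Aalgnu$. By Lemma \ref{ostruzione} there is an isomorphism of graded $\K$-vector spaces
\[
\E\cong\bigoplus_{n\ge1}(\R[1])^{\otimes n}.
\]
By Lemma \ref{lemW}, $\dim_\K\E^{-n}=\Phi_n-1$ for $n\ge1$, and $\E$ vanishes in degrees $\ge 0$, since $\Br(B)$ already does. Since $\R[1]$ is the $n=1$ summand, we get:
\begin{itemize}
\item $\R[1]$ vanishes in degrees $\ge0$;
\item $\R[1]^{-1}=0$, because $\E^{-1}=0$ (as $\Phi_1-1=0$);
\item each $r_k:=\dim_\K\R[1]^{-k}$ is finite, since $\R[1]^{-k}$ is a subspace of the finite-dimensional space $\E^{-k}$.
\end{itemize}
Hence $\R[1]$ is concentrated in degrees $\le-2$. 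Each graded piece of $\bigoplus_n(\R[1])^{\otimes n}$ is then a finite sum, so the isomorphism can be read off on Poincar\'e series. Set $R(t)=\sum_{k\ge2}r_kt^k$ and $E(t)=\sum_{n}\dim_\K(\E^{-n})t^n$. The isomorphism becomes the identity of formal power series
\[
E(t)=\frac{R(t)}{1-R(t)},\qquad\text{equivalently}\qquad R(t)=\frac{E(t)}{1+E(t)}.
\]

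\emph{Solving for the $r_k$.} Using $\Phi_n=\Phi_{n-1}+\Phi_{n-2}$ with $\Phi_1=1,\Phi_2=2$, the series $E(t)$ starts
\[
E(t)=t^2+2t^3+4t^4+7t^5+12t^6+20t^7+\cdots.
\]
We then solve for the $r_k$ recursively, degree by degree, by comparing the coefficient of $t^n$ in $E=R+R^2+R^3+\cdots$:
\begin{itemize}
\item $r_2=1$ and $r_3=2$;
\item $4=r_4+r_2^2$, so $r_4=3$;
\item $7=r_5+2r_2r_3$, so $r_5=3$;
\item $12=r_6+(2r_2r_4+r_3^2)+r_2^3$, so $r_6=1$;
\item $20=r_7+(2r_2r_5+2r_3r_4)+3r_2^2r_3=r_7+24$, so $r_7=-4$.
\end{itemize}
A dimension cannot be negative, so $\R$ cannot exist. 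This matches the announced failure in degree $-7$.

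\emph{Where the care is needed.} The computation itself is routine, so the main point of care is the reduction to power series. We must justify that $\R[1]$ has no components in degrees $\ge -1$; otherwise infinite sums or degree-$0$ tensor powers could appear. We must also justify that graded dimensions are additive and multiplicative under $\oplus$ and $\otimes$. Both hold here because $\K$ is a field and every graded piece involved is finite-dimensional. A minor notational point is that the statement writes $\F,\G\colon\A\rightrightarrows\B$ for the pair $f,g$ of Definition \ref{defesempio}, and Lemma \ref{lemW} is applied to exactly this pair.
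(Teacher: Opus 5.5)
Your proof is correct and follows the paper's argument: Lemma~\ref{ostruzione} reduces existence of $\R$ to $\E\cong T^c(\R[1])$ as graded spaces, and your recursive solution of $E=R+R^2+\cdots$ reproduces the paper's degree-by-degree count ($r_2,\dots,r_6=1,2,3,3,1$). Your $r_7=20-24=-4$ is exactly the paper's contradiction that the $24$ words of length $\ge 2$ in degree $-7$ exceed $\dim_\K\E^{-7}=20$; the power-series formulation is only a more compact bookkeeping of the same computation.
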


\begin{proof}
Assume that the coequalizer $\R$ exists.\ By Lemma \ref{ostruzione} we have $\E\cong T^c(V)$ as graded modules, where $V:=\R[1]$, 
so it is enough to prove that there is no graded vector space $V$ such that,\ $\dim_\K(T^c(V)^{-n})=\dim_\K(\E^{-n})$ for every $n\ge 1$.\ 
Note that, by Lemma \ref{lemW} the quotient $\E=\Br(\B)/W$ satisfies $\dim_{\K}(\E^{-n})=\Phi_n-1$, for the first $n$ we have:
\[
\begin{array}{c|ccccccc}
n & 1&2&3&4&5&6&7\\\hline
\dim_\K\bigl(\Br(\B)^{-n}\bigr)=\Phi_n & 1&2&3&5&8&13&21\\
\dim_\K\bigl(\E^{-n}\big)=\Phi_n-1 & 0&1&2&4&7&12&20
\end{array}
\]
Since $\dim_\K(\E^{-1})=0$, the module $V$ is concentrated in degrees $\le-2$.\
We determine a homogeneous basis of $V$ degree by degree.\ 
In degree $-2$ and in degree $-3$ there are no words of length $\ge 2$, so $V^{-2}$ and $V^{-3}$ have dimensions $1$ and $2$; we write
\begin{equation*}
V^{-2}=\langle x\rangle,\qquad V^{-3}=\langle y_1,y_2\rangle.
\end{equation*}
In degree $-4$ the only word of length $\ge2$ is $x\otimes x$ so $\dim_{\K}(V^{-4})=4-1=3$, we write $V^{-4}=\langle e_1,e_2,e_3\rangle$.\ 
In degree $-5$ the words of length $\ge2$ are $x\otimes y_j$ and $y_j\otimes x$, so
\begin{equation*}
\dim_\K (V^{-5})=7-4=3,\qquad V^{-5}=\langle w_1,w_2,w_3\rangle .
\end{equation*}
In degree $-6$ the words of length $2$ are $x\otimes e_k$, $e_k\otimes x$ and $y_i\otimes y_j$, that is $3+3+4=10$, and the only word of length $3$ is $x\otimes x\otimes x$ so $\dim_\K (V^{-6})=12-11=1$.\ 
Now we count the words of length at least $2$ in degree $-7$.\ 
We have the words of length $2$ are:
\begin{equation*}
x\otimes w_k,\quad w_k\otimes x\ (k=1,2,3),\qquad y_i\otimes e_j,\quad e_j\otimes y_i\ (i=1,2,\ j=1,2,3),
\end{equation*}
that is $3+3+6+6=18$.\ 
The words of length $3$ of degree $-7$ are:
\begin{equation*}
x\otimes x\otimes y_j,\qquad x\otimes y_j\otimes x,\qquad y_j\otimes x\otimes x\ (j=1,2),
\end{equation*}
that is $6$.\ 
So we get a subspace of dimension $18+6=24$ of $T^c(V)^{-7}$ but
\begin{equation*}
\dim_\K \big(T^c(V)^{-7}\big)=\dim_\K(\E^{-7})=20<24 ,
\end{equation*}
and we get a contradiction.\ 
\end{proof}

\begin{proof}[Proof of Theorem \ref{teoA}]
First we note that every epimorphism in $\F\colon\A\to\B$ in $\aCat$ is such that $\F^0$ is surjective on the objects, so by Theorem \ref{teoprincipale} we have that $f,g\colon A\rightrightarrows B$ has no coequalizer in the category of non unital $\Ain$-categories.
In particular considering the augmentations we have $f_+=\F$, $g_+=\G$, $A_+=\A$ and $B_+=\B$, so 
\begin{equation}\label{enzo}
\F,\G\colon \A\rightrightarrows \B
\end{equation}
does not have coequalizer in the category of augmented $\Ain$-categories which is a subcategory of $\aCat$. 
To prove that (\ref{enzo}) does not have coequalizer in $\aCat$ or in $\Aalg$ one can use the functor $U\colon\aCat\to\DgCat$ of \cite[Proposition 2.1]{COS}. Since $U$ is left adjoint to the inclusion $i\colon\DgCat\to\aCat$ one can use the same strategy of Theorem \ref{teoprincipale} to show that it cannot exist an $\aCat$ $Q$ (actually a DG-quiver) such that 
\begin{align*}
U(Q)\simeq U(\B)\Bigl/\bigl\langle\mbox{Im}\bigl(U(\F)-U(\G)\bigr)\bigr\rangle\ =\ \Omega(\E)^{+}=T\bigl(|\E|[-1]\bigr)
\end{align*}
as graded vector spaces.
\end{proof}

\begin{example}
The counterexample of Theorem \ref{teoA} seems to be the easiest possible. 
Consider the following diagram
\begin{equation}\label{erzo}
\F',\G'\colon \A\rightrightarrows \B'
\end{equation}
where $\B'$ is the strictly unital $\Ain$-algebra such that:
\begin{equation*}
|\B'|:=\K[\lambda]/(\lambda^2),\quad\text{$\deg(\lambda)=-1$}, \quad\text{ $m^n(\lambda,...,\lambda)=0$, for any $n\ge1$}.
\end{equation*}
The strictly unital $\Ain$-morphisms $\F'$ and $\G'$ are such that the only nonzero components are $\F'^1(1)=1$, $\F'^2(a,a)=\lambda$ and $\G'^1(1)=1$.
It is not hard to prove that the coequalizer of (\ref{erzo}) is given by $(\epsilon_{\B'}\colon\B'\to\K,\K)$ in $\Aalg$ and in $\Acat$.\

Consider now the diagram:
\begin{equation}\label{erzo2}
\F'',\Id_{\A}\colon \A\rightrightarrows \A
\end{equation}
where $\F''$ is the strict strictly unital $\Ain$-morphism such that $\F''^1(a)=-a$ and assume that $\text{char}(\K)\not=2$.\
The coequalizer of (\ref{erzo2}) in $\DgCat$ (the category of DG-categories) is $\K$ but the coequalizer in $\Acat$ is $(\F',\B')$.
\end{example}

\end{document}